\numberwithin{equation}{section}
\theoremstyle{plain}
\newtheorem{thm}{Theorem}[section]
\newtheorem{lem}{Lemma}[section]
\newtheorem{cor}{Corollary}[section]
\theoremstyle{remark}
\DeclareMathOperator{\arcsinh}{arcsinh}
\begin{document}

\title[Some inequalities for Neuman-S\'andor mean]
{A unified proof of inequalities and some new inequalities involving Neuman-S\'andor mean}

\author[W.-H. Li]{Wen-Hui Li}
\address[Li]{Department of Mathematics, School of Science,
Tianjin Polytechnic University, Tianjin City, 300387, China}
\email{\href{mailto: W.-H. Li <wen.hui.li102@gmail.com>}{wen.hui.li102@gmail.com},
\href{mailto: W.-H. Li <wen.hui.li@foxmail.com>}{wen.hui.li@foxmail.com}}

\author[F. Qi]{Feng Qi}
\address[Qi]{Department of Mathematics, School of Science,
 Tianjin Polytechnic University, Tianjin City, 300387, China}
\email{\href{mailto: F. Qi <qifeng618@gmail.com>}{qifeng618@gmail.com},
 \href{mailto: F. Qi <qifeng618@hotmail.com>}{qifeng618@hotmail.com},
 \href{mailto: F. Qi <qifeng618@qq.com>}{qifeng618@qq.com}}
\urladdr{\url{http://qifeng618.wordpress.com}}

\subjclass[2010]{Primary 26E60; Secondary 41A30, 26D07}

\keywords{mean; inequality; Neuman-S\'andor mean; monotonicity; unified proof; hyperbolic sine; hyperbolic cosine}

\begin{abstract}
In the paper, by finding linear relations of differences between some means, the authors supply a unified proof of some double inequalities for bounding Neuman-S\'andor means in terms of the arithmetic, harmonic, and contra-harmonic means and discover some new sharp inequalities involving Neuman-S\'andor, contra-harmonic, root-square, and other means of two positive real numbers.
\end{abstract}

\maketitle
\section{Introduction}
It is well known that the quantities
\begin{align*}
A(a,b)&=\frac{a+b}2,&G(a,b)&=\sqrt{ab}\,,\\
H(a,b)&=\frac{2ab}{a+b},&\overline{C}(a,b)&=\frac{2(a^2+ab+b^2)}{3(a+b)},\\
C(a,b)&=\frac{a^2+b^2}{a+b},&P(a,b)&=\frac{a-b}{4\arctan\sqrt{a/b}\,-\pi},\\
Q(a,b)&=\sqrt{\frac{a^2+b^2}{2}}\,,&T(a,b)&=\frac{a-b}{2\arctan\frac{a-b}{a+b}}
\end{align*}
are respectively called in the literature the arithmetic, geometric, harmonic, centroidal,
contra-harmonic, first Seiffert, root-square, and second Seiffert means of two positive real
numbers $a$ and $b$ with $a\ne b$.
\par
For $a,b>0$ with $a\ne b$, Neuman-S\'andor mean $M(a,b)$ is defined in~\cite{Neuman-M-Def} by
\begin{equation*}
M(a,b)=\frac{a-b}{2\arcsinh\frac{a-b}{a+b}},
\end{equation*}
where $\arcsinh x=\ln(x+\sqrt{x^2+1}\,)$ is the inverse hyperbolic sine function. At the same time, a chain of inequalities
\begin{equation*}
G(a,b)<L_{-1}(a,b)<P(a,b)<A(a,b)<M(a,b)<T(a,b)<Q(a,b)
\end{equation*}
were given in~\cite{Neuman-M-Def}, where
\begin{align*}
L_p(a,b)&=
\begin{cases}\displaystyle
\biggl[\frac{b^{p+1}-a^{p+1}}{(p+1)(b-a)}\biggr]^{1/p}, & p\ne-1,0 \\ \displaystyle
\frac1e\biggl(\frac{b^b}{a^a}\biggr)^{1/(b-a)}, & p=0\\ \displaystyle
\frac{b-a}{\ln b-\ln a},& p=-1
\end{cases}
\end{align*}
is the $p$-th generalized logarithmic mean of $a$ and $b$ with $a\ne b$.
\par
In~\cite{Neuman-M-Def, Neuman-M-Def-2}, it was established that
\begin{gather*}
A(a,b)<M(a,b)<T(a,b),\quad
P(a,b)<M(a,b)<T^2(a,b),\\
A(a,b)T(a,b)<M^2(a,b)<\frac{A^2(a,b)+T^2(a,b)}2
\end{gather*}
for $a,b>0$ with $a\ne b$.\par
For $0<a,b<\frac12$ with $a\ne b$, Ky Fan type inequalities
\begin{multline*}
\frac{G(a,b)}{G(1-a,1-b)}<\frac{L_{-1}(a,b)}{L_{-1}(1-a,1-b)}<\frac{P(a,b)}{P(1-a,1-b)}\\
<\frac{A(a,b)}{A(1-a,1-b)}
<\frac{M(a,b)}{M(1-a,1-b)}<\frac{T(a,b)}{T(1-a,1-b)}
\end{multline*}
were presented in~\cite[Proposition~2.2]{Neuman-M-Def}.
\par
In~\cite{L-L-C-NS}, it was showed that the double inequality
\begin{equation*}
L_{p_0}(a,b)<M(a,b)<L_2(a,b)
\end{equation*}
holds for all $a,b>0$ with $a\ne b$ and for $p_0=1.843\dotsc$, where $p_0$ is the unique solution of the
equation $(p+1)^{1/p}=2\ln(1+\sqrt2\,)$.\par
In~\cite{Neuman-note}, Neuman proved that the double inequalities
\begin{equation*}
\alpha Q(a,b)+(1-\alpha)A(a,b)<M(a,b)<\beta Q(a,b)+(1-\beta)A(a,b)
\end{equation*}
and
\begin{equation}\label{Neuman-eq2}
\lambda C(a,b)+(1-\lambda)A(a,b)<M(a,b)<\mu C(a,b)+(1-\mu)A(a,b)
\end{equation}
hold for all $a,b>0$ with $a\ne b$ if and only if
\begin{equation*}
\alpha\le\frac{1-\ln(1+\sqrt2\,)}{(\sqrt2-1)\ln(1+\sqrt2\,)}=0.3249\dotsc,\quad
\beta\ge\frac13
\end{equation*}
and
\begin{equation*}
\lambda\le\frac{1-\ln(1+\sqrt2\,)}{\ln(1+\sqrt2\,)}=0.1345\dotsc,\quad
\mu\ge\frac16.
\end{equation*}
\par
In~\cite[Theorems~1.1 to~1.3]{Z-C-L-optimal}, it was found that the double inequalities
\begin{equation*}
\alpha_1 H(a,b)+(1-\alpha_1)Q(a,b)<M(a,b)<\beta_1 H(a,b)+(1-\beta_1)Q(a,b),
\end{equation*}
\begin{equation*}
\alpha_2 G(a,b)+(1-\alpha_2)Q(a,b)<M(a,b)<\beta_2 G(a,b)+(1-\beta_2)Q(a,b),
\end{equation*}
and
\begin{equation}\label{zhao-eq1}
\alpha_3 H(a,b)+(1-\alpha_3)C(a,b)<M(a,b)<\beta_3 H(a,b)+(1-\beta_3)C(a,b)
\end{equation}
hold for all $a,b>0$ with $a\ne b$ if and only if
\begin{gather*}
\alpha_1\ge\frac29=0.2222\dotsc,\quad
\beta_1\le1-\frac1{\sqrt2\,\ln(1+\sqrt2\,)}=0.1977\dotsc, \\
\alpha_2\ge\frac13=0.3333\dotsc,\quad
\beta_2\le1-\frac1{\sqrt2\,\ln(1+\sqrt2\,)}=0.1977\dotsc,\\
\alpha_3\ge1-\frac1{2\ln(1+\sqrt2\,)}=0.4327\dotsc, \quad\beta_3\le\frac5{12}=0.4166\dotsc.
\end{gather*}
\par
In~\cite[Theorem~3.1]{Z-C-identric}, it was established that the double inequality
\begin{equation*}
\alpha I(a,b)+(1-\alpha)Q(a,b)<M(a,b)<\beta I(a,b)+(1-\beta)Q(a,b)
\end{equation*}
holds for all $a,b>0$ with $a\ne b$ if and only if
\begin{equation*}
\alpha\ge\frac12\quad \text{and} \quad
\beta\le\frac{e\bigl[\sqrt2\,\ln(1+\sqrt2\,)-1\bigr]}
{(\sqrt2\,e-2)\ln(1+\sqrt2\,)}=0.4121\dotsc.
\end{equation*}
\par
For more information on this topic, please refer to~\cite{chu-AAA, C-L-G-S-logarithmic, C-W-refinements, J-Q-contra, Jiang-Neuman, L-L-C-NS, Toader-M-Li-Zheng-Rev.tex, Neuman-note, Neuman-SB-mean, N-J-compan, Neuman-M-Def-2, W-C-L-Sharp, Zhao-note, Z-C-L-optimal, Zhao-geometric} and plenty of references cited therein.
\par
The first goal of this paper is, by finding linear relations of differences between some means, to supply a unified proof of inequalities~\eqref{Neuman-eq2} and~\eqref{zhao-eq1}.
\par
The second purpose of this paper is to establish some new sharp inequalities involving Neuman-S\'andor,
centroidal, contra-harmonic, and root-square means of two positive real numbers $a$ and $b$ with $a\ne b$.

\section{Lemmas}
In order to attain our aims, the following lemmas are needed.

\begin{lem}\cite[Lemma~1.1]{S-M-zero-balanced}\label{lem2.1}
Suppose that the power series $f(x)=\sum_{n=0}^{\infty}a_nx^n$ and $g(x)=\sum_{n=0}^{\infty}b_nx^n$
have the radius of convergence $r>0$ and $b_n>0$ for all $n\in\mathbb{N}=\{0,1,2,\dotsc\}$.
Let $h(x)=\frac{f(x)}{g(x)}$. Then the following statements are true.
\begin{enumerate}
\item
If the sequence $\{\frac{a_n}{b_n}\}_{n=0}^{\infty}$ is \textup{(}strictly\textup{)} increasing
\textup{(}decreasing\textup{)}, then $h(x)$ is also \textup{(}strictly\textup{)}
increasing \textup{(}decreasing\textup{)} on $(0,r)$.
\item
If the sequence $\{\frac{a_n}{b_n}\}$ is \textup{(}strictly\textup{)} increasing \textup{(}decreasing\textup{)}
for $0<n\le n_0$ and \textup{(}strictly\textup{)}
decreasing \textup{(}increasing\textup{)} for $n>n_0$, then there exists $x_0\in(0,r)$ such
that $h(x)$ is \textup{(}strictly\textup{)}
increasing \textup{(}decreasing\textup{)} on $(0,x_0)$ and \textup{(}strictly\textup{)} decreasing
\textup{(}increasing\textup{)} on $(x_0,r)$.
\end{enumerate}
\end{lem}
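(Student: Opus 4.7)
The plan is to reduce the monotonicity of $h=f/g$ to a positivity question for the Wronskian-type quantity $N(x):=f'(x)g(x)-f(x)g'(x)$. Since $b_n>0$ for every $n$, the function $g$ is strictly positive on $(0,r)$, so $h'(x)=N(x)/g(x)^2$ has the same sign as $N(x)$, and it suffices to locate the sign changes of $N$ on $(0,r)$.

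Next I would expand $N$ by computing the Cauchy products $f'g$ and $fg'$. Writing $r_n:=a_n/b_n$ and then pairing each summation index $j$ with its mirror $n+1-j$ produces a symmetric representation of the form
\begin{equation*}
[x^n]\,N(x)=\sum_{j>(n+1)/2}(2j-n-1)\,b_j\,b_{n+1-j}\,(r_j-r_{n+1-j}),
\end{equation*}
in which each summand has a sign governed solely by the monotonicity of $\{r_n\}$, because $b_j,b_{n+1-j}>0$. For part~(1), when $\{r_n\}$ is monotone (say strictly increasing), both factors $(2j-n-1)$ and $(r_j-r_{n+1-j})$ are non-negative for every admissible $j$, and strictly positive for at least one pair, so $[x^n]N(x)\ge 0$ for all $n$ with strict inequality somewhere; hence $N(x)>0$ on $(0,r)$ and $h$ is strictly increasing there. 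The decreasing case is symmetric.

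For part~(2), the same representation shows that $[x^n]\,N(x)$ is positive for every $n$ small enough that all admissible indices lie below the turning point $n_0$, and negative for all $n$ large enough that the contributions from indices above $n_0$ dominate. The main obstacle, and the place requiring the most care, is promoting this coefficient-level sign pattern into a \emph{single-crossing} statement for $N$ on $(0,r)$. My plan is to decompose $N=N_{+}-N_{-}$ into the power series formed by its positive and negative coefficients (both having non-negative coefficients and hence log-convex-friendly behaviour) and to show, by a Descartes-type argument on power series or by studying the logarithmic derivative of the ratio $N_{-}/N_{+}$, that $N_{-}(x)/N_{+}(x)$ is strictly increasing on $(0,r)$. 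This would force $N$ to change sign at most once, and a comparison of signs of $N$ near $x=0$ and near $x=r$ would produce exactly one zero $x_0\in(0,r)$ at which $h$ attains its extremum, yielding the claimed monotonicity of $h$ on $(0,x_0)$ and $(x_0,r)$.
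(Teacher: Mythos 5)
First, a point of comparison: the paper does not prove this lemma at all --- it is quoted from \cite[Lemma~1.1]{S-M-zero-balanced} (part~(1) is the classical Biernacki--Krzy\.z criterion), so there is no in-paper proof to measure your attempt against; note also that the paper only ever invokes part~(1), since the sequences $c_n$ in Lemmas~2.2--2.4 are globally monotone. Your treatment of part~(1) is correct and is the standard argument: writing $N=f'g-fg'$ and $r_n=a_n/b_n$, the identity
\begin{equation*}
[x^n]N=\sum_{j=0}^{n+1}j\,b_jb_{n+1-j}(r_j-r_{n+1-j})
=\sum_{j>(n+1)/2}(2j-n-1)\,b_jb_{n+1-j}(r_j-r_{n+1-j})
\end{equation*}
checks out, every summand is non-negative when $\{r_n\}$ is increasing, at least one (e.g.\ $j=n+1$) is strictly positive in the strict case, and $g>0$ on $(0,r)$, so $h'=N/g^2>0$ there.

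Part~(2), however, is only a plan, and the plan has genuine gaps. (a) The assertion that $[x^n]N$ is negative for all large $n$ is not proved and does not follow from the displayed formula: for indices with $n+1-j\le n_0<j$ the factor $r_j-r_{n+1-j}$ has no determined sign, and whether the genuinely negative terms (those with $n_0<n+1-j<j$) dominate depends on the relative sizes of the products $b_jb_{n+1-j}$, about which nothing is assumed. (b) Even granting a coefficient pattern $+,\dots,+,-,\dots,-$ for $N$, you must still rule out multiple sign changes of the \emph{function} $N$ on $(0,r)$. Of the two routes you propose, the one based on showing $N_-/N_+$ increasing cannot work as stated: $N_+$ and $N_-$ have disjoint coefficient supports, so the coefficientwise ratios are $0$ or $\infty$ and part~(1) gives nothing. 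A Descartes-type rule for power series (the number of zeros in $(0,r)$ is at most the number of sign changes of the coefficient sequence) would close this step, but it must be stated and proved, and it presupposes (a), namely that the coefficient sequence of $N$ changes sign exactly once. (c) You would also need to examine the sign of $N$ near $r$ to ensure the turning point $x_0$ genuinely lies in $(0,r)$. As written, part~(2) is not established.
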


\begin{lem}\label{lem2.2-h1}
Let
\begin{equation}
h_1(x)=\frac{\sinh x-x}{2x\sinh^2x}.
\end{equation}
Then $h_1(x)$ is strictly decreasing on $(0,\infty)$ and has the limit $\lim_{x\to0^+}h_1(x)=\frac1{12}$.
\end{lem}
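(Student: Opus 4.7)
The plan is to recognize $h_1$ as a quotient of two power series whose non-zero coefficients are easy to write down, and then apply Lemma~\ref{lem2.1} directly.

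First I would expand both numerator and denominator in Maclaurin series. Using $\sinh x=\sum_{n=0}^{\infty}\frac{x^{2n+1}}{(2n+1)!}$ and the identity $2\sinh^2 x=\cosh(2x)-1$, we obtain
\begin{equation*}
\sinh x - x = \sum_{n=1}^{\infty}\frac{x^{2n+1}}{(2n+1)!}
= x^{3}\sum_{n=0}^{\infty}\frac{x^{2n}}{(2n+3)!},
\end{equation*}
\begin{equation*}
2x\sinh^{2}x = \sum_{n=1}^{\infty}\frac{2^{2n}x^{2n+1}}{(2n)!}
= x^{3}\sum_{n=0}^{\infty}\frac{2^{2n+2}x^{2n}}{(2n+2)!}.
\end{equation*}
Cancelling the factor $x^{3}$ yields $h_{1}(x)=f(x^{2})/g(x^{2})$ with
\begin{equation*}
a_{n}=\frac{1}{(2n+3)!},\qquad b_{n}=\frac{2^{2n+2}}{(2n+2)!}>0,
\end{equation*}
both series having radius of convergence~$\infty$.

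Next I would examine the auxiliary sequence
\begin{equation*}
\frac{a_{n}}{b_{n}}=\frac{(2n+2)!}{(2n+3)!\,2^{2n+2}}=\frac{1}{(2n+3)\,2^{2n+2}},
\end{equation*}
which is obviously strictly decreasing in $n\in\mathbb{N}$. Part~(1) of Lemma~\ref{lem2.1} then shows that $f(u)/g(u)$ is strictly decreasing for $u>0$; since $u=x^{2}$ is a strictly increasing bijection of $(0,\infty)$ onto itself, $h_{1}(x)$ is strictly decreasing on $(0,\infty)$.

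Finally, the stated limit comes out as the ratio of the leading coefficients,
\begin{equation*}
\lim_{x\to 0^{+}}h_{1}(x)=\frac{a_{0}}{b_{0}}=\frac{1/3!}{4/2!}=\frac{1}{12}.
\end{equation*}
No step here is delicate; the only thing that needs care is to divide out the common factor $x^{3}$ before invoking Lemma~\ref{lem2.1}, since that lemma requires both series to start at the constant term with $b_{0}>0$.
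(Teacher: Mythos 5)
Your proof is correct and follows essentially the same route as the paper: expand numerator and denominator as power series in $x^2$, check that the coefficient ratio $c_n=\frac1{(2n+3)2^{2n+2}}$ is strictly decreasing, and invoke Lemma~\ref{lem2.1} together with $\lim_{x\to0^+}h_1(x)=a_0/b_0$. Your extra remark about substituting $u=x^2$ is a small point of rigor the paper glosses over, but it does not change the argument.
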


\begin{proof}
Let
$f_1(x)=\sinh x-x$ and $f_2(x)=2x\sinh^2x=x\cosh2x-x$.
Using the power series
\begin{equation}\label{sinh-cosh-series}
\sinh x=\sum_{n=0}^{\infty}\frac{x^{2n+1}}{(2n+1)!} \quad\text{and}\quad
\cosh x=\sum_{n=0}^{\infty}\frac{x^{2n}}{(2n)!},
\end{equation}
we can express the function $f_1(x)$ and $f_2(x)$ as
\begin{equation}\label{h1-f1=eq1}
f_1(x)=\sum_{n=0}^{\infty}\frac{x^{2n+3}}{(2n+3)!}\quad
\text{and}\quad
f_2(x)=\sum_{n=0}^{\infty}\frac{2^{2n+2}x^{2n+3}}{(2n+2)!}.
\end{equation}
Hence, we have
\begin{equation}\label{h1-f1-f2-eq}
h_1(x)=\frac{\sum_{n=0}^{\infty}a_nx^{2n}}{\sum_{n=0}^{\infty}b_nx^{2n}},
\end{equation}
where $a_n=\frac1{(2n+3)!}$ and $b_n=\frac{2^{2n+2}}{(2n+2)!}$.
Let $c_n=\frac{a_n}{b_n}$. Then $c_n=\frac1{(2n+3)2^{2n+2}}$ and
\begin{equation*}
c_{n+1}-c_n=\frac{-(6n+17)}{(2n+3)(2n+5)2^{2n+4}}<0.
\end{equation*}
As a result, by Lemma~\ref{lem2.1}, it follows that the function $h_1(x)$ is strictly decreasing
on $(0,\infty)$.
\par
From~\eqref{h1-f1-f2-eq}, it is easy to see that $\lim_{x\to0^+}h_1(x)=\frac{a_0}{b_0}=\frac1{12}$.
The proof of Lemma~\ref{lem2.2-h1} is complete.
\end{proof}

\begin{lem}\label{lem2.3-h2}
Let
\begin{equation}
h_2(x)=\frac{1-\frac{\sinh x}x+\frac{\sinh^2x}3}{\cosh x-\frac{\sinh x}x}.
\end{equation}
Then $h_2(x)$ is strictly increasing on $(0,\infty)$ and has the limit $\lim_{x\to0^+}h_2(x)=\frac12$.
\end{lem}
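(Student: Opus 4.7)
The plan is to proceed exactly as in the proof of Lemma~\ref{lem2.2-h1}: multiply the numerator and denominator of $h_2(x)$ through by $x$ to kill the $\sinh x/x$ singularity at the origin, so that $h_2$ becomes a quotient of two honest even--ordered power series, and then invoke Lemma~\ref{lem2.1}(1).

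First I would rewrite
\[
h_2(x)=\frac{x-\sinh x+\tfrac13\,x\sinh^2 x}{x\cosh x-\sinh x}.
\]
Using $\sinh^2 x=\tfrac12(\cosh 2x-1)$ together with~\eqref{sinh-cosh-series}, a routine term-by-term expansion should give
\[
h_2(x)=\frac{\sum_{n=1}^{\infty}a_n x^{2n+1}}{\sum_{n=1}^{\infty}b_n x^{2n+1}}, \qquad a_n=\frac{(2n+1)2^{2n-1}-3}{3(2n+1)!},\quad b_n=\frac{2n}{(2n+1)!}.
\]
Both series begin at $x^{3}$, and $b_n>0$ for $n\ge 1$, so Lemma~\ref{lem2.1} applies. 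The limit is then immediate by reading off the leading coefficients:
\[
\lim_{x\to 0^+}h_2(x)=\frac{a_1}{b_1}=\frac{1/6}{1/3}=\frac12.
\]

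Next I would set $c_n=a_n/b_n=\dfrac{(2n+1)2^{2n-1}-3}{6n}$ and check that $\{c_n\}_{n\ge1}$ is strictly increasing. Reducing $c_{n+1}-c_n$ to a single fraction over $6n(n+1)$ simplifies the problem to verifying positivity of the expression
\[
2^{2n-1}\bigl(6n^2+9n-1\bigr)+3,
\]
which is clear for $n\ge 1$ since $6n^2+9n-1\ge 14$. Lemma~\ref{lem2.1}(1) then yields strict monotonicity of $h_2$ on $(0,\infty)$.

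The argument is essentially mechanical and I do not expect any genuine obstacle. The only step that requires a little care is the expansion of the numerator: one has to line up the series for $-\sinh x$ and $\tfrac13 x\sinh^2 x$ so that the terms of degree~$0$, $1$, and $2$ cancel and the common factor $x^3$ can be absorbed, after which the monotonicity of the coefficient ratio reduces to the polynomial inequality above.
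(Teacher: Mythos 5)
Your proposal is correct and follows essentially the same route as the paper: the paper likewise writes $h_2$ as a ratio of power series via $\sinh^2x=\frac{\cosh2x-1}{2}$, obtains $c_n=\frac{(2n+3)2^{2n+1}-3}{6(n+1)}$ (which is exactly your $c_{n+1}$ after the index shift caused by your multiplying through by $x$), verifies $c_{n+1}-c_n>0$, and applies Lemma~\ref{lem2.1}. Your coefficient computations and the reduction of the monotonicity to the positivity of $2^{2n-1}(6n^2+9n-1)+3$ check out.
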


\begin{proof}
Let
\begin{equation*}
f_3(x)=1-\frac{\sinh x}x+\frac{\sinh^2x}3=1-\frac{\sinh x}x+\frac{\cosh2x-1}6
\end{equation*}
and
\begin{equation*}
f_4(x)=\cosh x-\frac{\sinh x}x.
\end{equation*}
Making use of the power series in~\eqref{sinh-cosh-series} shows that
\begin{equation*}
f_3(x)=\sum_{n=0}^{\infty}\frac{(2n+3)2^{2n+2}-6}{6(2n+3)!}x^{2n+2}\quad
\text{and}\quad
f_4(x)=\sum_{n=0}^{\infty}\frac{2n+2}{(2n+3)!}x^{2n+2}.
\end{equation*}
Therefore, we have
\begin{equation}\label{lem2.3-h2-f3-f4-eq}
h_2(x)=\frac{\sum_{n=0}^{\infty}a_nx^{2n+2}}
{\sum_{n=0}^{\infty}b_nx^{2n+2}},
\end{equation}
where $a_n=\frac{(2n+3)2^{2n+2}-6}{6(2n+3)!}$ and $b_n=\frac{2n+2}{(2n+3)!}$.
Let $c_n=\frac{a_n}{b_n}$. Then
\begin{equation*}
c_n=\frac{(2n+3)2^{2n+1}-3}{6(n+1)}
\end{equation*}
and
\begin{equation*}
c_{n+1}-c_n=\frac{3+7\cdot2^{2n+2}+21n\cdot2^{2n+1}+3n^2\cdot2^{2n+2}}{6(n+1)(n+2)}>0.
\end{equation*}
Accordingly, by Lemma~\ref{lem2.1}, it follows that the function $h_2(x)$ is strictly increasing on $(0,\infty)$.
\par
It is clear that $\lim_{x\to0^+}h_2(x)=\frac{a_0}{b_0}=\frac12$.
The proof of Lemma~\ref{lem2.3-h2} is complete.
\end{proof}

\begin{lem}\label{lem2.4-h3}
Let
\begin{equation}
h_3(x)=\frac{\cosh x-\frac{\sinh x}x}{1+\sinh^2x-\frac{\sinh x}x}.
\end{equation}
Then $h_3(x)$ is strictly decreasing on $(0,\infty)$ and has the limit $\lim_{x\to0^+}h_3(x)=\frac25$.
\end{lem}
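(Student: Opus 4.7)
The plan is to follow the template of Lemmas \ref{lem2.2-h1} and \ref{lem2.3-h2}: rewrite $h_3$ as a ratio of two power series with positive coefficients and the same lowest power of $x$, and then invoke Lemma \ref{lem2.1}. First I would multiply the numerator and denominator of $h_3(x)$ by $x$ and use $1+\sinh^2 x=\cosh^2 x=\frac{1+\cosh 2x}{2}$ to obtain
\[
h_3(x)=\frac{2(x\cosh x-\sinh x)}{x+x\cosh 2x-2\sinh x}.
\]

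Using the series \eqref{sinh-cosh-series}, the numerator expands as
\[
2(x\cosh x-\sinh x)=\sum_{n=0}^{\infty}\frac{4(n+1)}{(2n+3)!}x^{2n+3},
\]
where the $n=0$ term of $x\cosh x-\sinh x$ vanishes so that after an index shift the series starts at $x^3$. In the denominator the linear contributions $x$, $x$ from $x+x\cosh 2x$, and $-2x$ from $-2\sinh x$ cancel exactly, producing
\[
x+x\cosh 2x-2\sinh x=\sum_{n=0}^{\infty}\frac{(2n+3)2^{2n+2}-2}{(2n+3)!}x^{2n+3}.
\]
Both series have strictly positive coefficients and share lowest power $x^3$, so Lemma \ref{lem2.1} applies with $a_n=\frac{4(n+1)}{(2n+3)!}$ and $b_n=\frac{(2n+3)2^{2n+2}-2}{(2n+3)!}$.

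Setting $c_n=a_n/b_n$ and cancelling a factor of $2$ gives
\[
c_n=\frac{2(n+1)}{(2n+3)2^{2n+1}-1},
\]
so already $c_0=\frac{2}{5}$, which yields $\lim_{x\to 0^+}h_3(x)=\frac{2}{5}$ as the ratio of lowest-order coefficients. The remaining step is to verify $c_{n+1}<c_n$ for every $n\ge 0$. Cross-multiplying turns this into
\[
(n+1)\bigl[(2n+5)2^{2n+3}-1\bigr]-(n+2)\bigl[(2n+3)2^{2n+1}-1\bigr]>0,
\]
and factoring out $2^{2n+1}$ collapses the left-hand side to $(6n^2+21n+14)\,2^{2n+1}+1$, which is manifestly positive. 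Applying Lemma \ref{lem2.1}(i) then delivers strict decrease of $h_3$ on $(0,\infty)$. The only real obstacle is bookkeeping: organising the coefficients correctly after the $\cosh 2x$ substitution so that the linear terms in the denominator really do cancel, and then carrying out the single polynomial comparison without arithmetic slips.
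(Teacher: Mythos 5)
Your proposal is correct and follows essentially the same route as the paper: both rewrite $h_3$ as a ratio of power series with positive coefficients (yours differ from the paper's only by a common factor of $2$ and one extra power of $x$, so the ratio $c_n=\frac{2(n+1)}{(2n+3)2^{2n+1}-1}$ is identical), verify $c_{n+1}<c_n$ via the same polynomial identity $(6n^2+21n+14)2^{2n+1}+1>0$, and conclude by Lemma~\ref{lem2.1} with limit $c_0=\frac25$. No gaps.
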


\begin{proof}
Let
\begin{equation*}
f_5(x)=\cosh x-\frac{\sinh x}x
\end{equation*}
and
\begin{equation*}
f_6(x)=1+\sinh^2x-\frac{\sinh x}x=1-\frac{\sinh x}x+\frac{\cosh2x-1}2.
\end{equation*}
Utilizing the power series in~\eqref{sinh-cosh-series} gives
\begin{equation*}
f_5(x)=\sum_{n=0}^{\infty}\frac{2n+2}{(2n+3)!}x^{2n+2}\quad\text{and}\quad
f_6(x)=\sum_{n=0}^{\infty}\frac{(2n+3)2^{2n+1}-1}{(2n+3)!}x^{2n+2}.
\end{equation*}
This implies that
\begin{equation}\label{lem2.4-h3-f5-f6-eq}
h_3(x)=\frac{\sum_{n=0}^{\infty}a_nx^{2n+2}}
{\sum_{n=0}^{\infty}b_nx^{2n+2}},
\end{equation}
where $a_n=\frac{2n+2}{(2n+3)!}$ and $b_n=\frac{(2n+3)2^{2n+1}-1}{(2n+3)!}$.
Let $c_n=\frac{a_n}{b_n}$. Then
\begin{equation*}
c_n=\frac{2n+2}{(2n+3)2^{2n+1}-1}
\end{equation*}
and
\begin{equation*}
c_{n+1}-c_n=-\frac{2(1+7\cdot2^{2n+2}+21n\cdot2^{2n+1}+3n^2\cdot2^{2n+2})}
{(3\cdot2^{2n+1}+n\cdot2^{2n+2}-1)(5\cdot2^{2n+3}+n\cdot2^{2n+4}-1)}<0.
\end{equation*}
In light of Lemma~\ref{lem2.1}, we obtain that the function $h_3(x)$ is strictly decreasing on $(0,\infty)$.
\par
It is obvious that $\lim_{x\to0^+}h_3(x)=\frac{a_0}{b_0}=\frac25$.
The proof of Lemma~\ref{lem2.4-h3} is complete.
\end{proof}

\section{A unified proof of inequalities~\eqref{Neuman-eq2} and~\eqref{zhao-eq1}}

Now we are in a position to supply a unified proof of inequalities~\eqref{Neuman-eq2} and~\eqref{zhao-eq1} and, as corollaries, to establish some new inequalities involving Neuman-S\'andor, contra-harmonic, centroidal, and root-square means of two positive real numbers $a$ and $b$ with $a\ne b$.
\par
It is not difficult to see that the inequalities~\eqref{Neuman-eq2} and~\eqref{zhao-eq1} can
be rearranged respectively as
\begin{equation}\label{Neuman-eq2-1}
\lambda-1<\frac{M(a,b)-C(a,b)}{C(a,b)-A(a,b)}<\mu-1
\end{equation}
and
\begin{equation}\label{zhao-eq1-1}
-\alpha_3<\frac{M(a,b)-C(a,b)}{C(a,b)-H(a,b)}<-\beta_3.
\end{equation}
The denominators in~\eqref{Neuman-eq2-1} and~\eqref{zhao-eq1-1} meet
\begin{equation}\label{lines-CH}
2[C(a,b)-A(a,b)]=C(a,b)-H(a,b)=\frac{(a-b)^2}{a+b}
\end{equation}
which were presented in~\cite[Eq.~(4.4)]{Jiang-serffert}.
This implies that the inequalities~\eqref{Neuman-eq2} and~\eqref{zhao-eq1} are identical up to a
scalar. Therefore, it is sufficient to prove one of the two inequalities~\eqref{Neuman-eq2}
and~\eqref{zhao-eq1}.
\par
By a direct calculation, we also find
\begin{multline}\label{line-equation}
6[\overline{C}(a,b)-A(a,b)]=3[C(a,b)-\overline{C}(a,b)] =2[A(a,b)-H(a,b)]\\
=\frac32[\overline{C}(a,b)-H(a,b)]=\frac{(a-b)^2}{a+b}\triangleq CH(a,b).
\end{multline}
So, it is natural to raise a problem: what is the best constants $\alpha$ and $\beta$ such that the double inequality
\begin{equation}\label{thm3.1-eq}
\alpha<\frac{M(a,b)-C(a,b)}{CH(a,b)}<\beta
\end{equation}
holds for all $a,b>0$ with $a\ne b$? The following theorem gives a solution to this problem.

\begin{thm}\label{thm3.1}
The double inequality~\eqref{thm3.1-eq} holds for all $a,b>0$ with $a\ne b$ if and only if
\begin{equation*}
\alpha\le\frac1{2\ln(1+\sqrt2\,)}-1=-0.4327\dotsc\quad\text{and}\quad\beta\ge-\frac5{12}=-0.4166\dotsc.
\end{equation*}
\end{thm}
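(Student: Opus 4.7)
The plan is to reduce the two-variable problem to a one-variable monotonicity question via the Neuman--Sándor substitution, and then apply Lemma~\ref{lem2.2-h1} directly. Since $M$, $C$, and $CH$ are each symmetric and homogeneous of degree one in $(a,b)$, I may assume $a>b>0$ and normalize $a+b=2$. Set $\sinh x=\frac{a-b}{a+b}$, so that $x=\arcsinh\frac{a-b}{a+b}$ ranges over $\bigl(0,\ln(1+\sqrt2\,)\bigr)$ as $b$ decreases from $a$ to $0$. Then $a=1+\sinh x$, $b=1-\sinh x$, and a short computation gives
\begin{equation*}
M(a,b)=\frac{\sinh x}{x},\qquad C(a,b)=1+\sinh^2 x,\qquad CH(a,b)=2\sinh^2 x.
\end{equation*}

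The key observation is that the ratio collapses to the function of Lemma~\ref{lem2.2-h1} up to an additive constant:
\begin{equation*}
\frac{M(a,b)-C(a,b)}{CH(a,b)}
=\frac{\sinh x - x - x\sinh^2 x}{2x\sinh^2 x}
=\frac{\sinh x - x}{2x\sinh^2 x}-\frac12
=h_1(x)-\frac12.
\end{equation*}
By Lemma~\ref{lem2.2-h1}, $h_1$ is strictly decreasing on $(0,\infty)$ with $h_1(0^+)=\frac1{12}$, so the ratio is strictly decreasing on $\bigl(0,\ln(1+\sqrt2\,)\bigr)$, with
\begin{equation*}
\lim_{x\to0^+}\Bigl[h_1(x)-\tfrac12\Bigr]=\tfrac1{12}-\tfrac12=-\tfrac5{12},
\end{equation*}
and at the right endpoint, where $\sinh x=1$ and $x=\ln(1+\sqrt2\,)$,
\begin{equation*}
h_1(\ln(1+\sqrt2\,))-\tfrac12
=\frac{1-\ln(1+\sqrt2\,)}{2\ln(1+\sqrt2\,)}-\tfrac12
=\frac1{2\ln(1+\sqrt2\,)}-1.
\end{equation*}

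Combining these, for all $a,b>0$ with $a\ne b$,
\begin{equation*}
\frac1{2\ln(1+\sqrt2\,)}-1<\frac{M(a,b)-C(a,b)}{CH(a,b)}<-\frac5{12},
\end{equation*}
and both bounds are sharp (attained in the respective limits $b\to0^+$ and $b\to a^-$), which gives precisely the stated characterization of $\alpha$ and $\beta$. All of the analytic difficulty has already been handled inside Lemma~\ref{lem2.2-h1}, whose proof relied on the power-series monotonicity criterion of Lemma~\ref{lem2.1}; the only potential pitfall in the present argument is identifying the endpoint values, and in particular recognizing $h_1(\ln(1+\sqrt2\,))-\tfrac12$ as the somewhat disguised constant $\frac1{2\ln(1+\sqrt2\,)}-1$, which is a one-step substitution using $\sinh(\ln(1+\sqrt2\,))=1$.
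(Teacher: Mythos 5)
Your proof is correct and follows essentially the same route as the paper: both reduce the ratio $\frac{M-C}{CH}$ via the substitution $\sinh\theta=\frac{a-b}{a+b}$ to $h_1(\theta)-\frac12$ on $\bigl(0,\ln(1+\sqrt2\,)\bigr)$ and then invoke Lemma~\ref{lem2.2-h1}. Your version is in fact slightly more complete, since you explicitly evaluate the endpoint value $h_1\bigl(\ln(1+\sqrt2\,)\bigr)-\frac12=\frac1{2\ln(1+\sqrt2\,)}-1$, which the paper leaves implicit.
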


\begin{proof}
Without loss of generality, we assume that $a>b>0$. Let $x=\frac{a}b$. Then $x>1$ and
\begin{equation*}
\frac{M(a,b)-C(a,b)}{CH(a,b)}=\frac{\frac{x-1}{2\arcsinh{\frac{x-1}{x+1}}}
-\frac{x^2+1}{x+1}}{\frac{(x-1)^2}{x+1}}.
\end{equation*}
Let $t=\frac{x-1}{x+1}$. Then $t\in(0,1)$ and
\begin{equation*}
\frac{M(a,b)-C(a,b)}{CH(a,b)}=\frac{\frac{t}{\arcsinh t}
-t^2-1}{2t^2}.
\end{equation*}
Let $t=\sinh\theta$ for $\theta\in\bigl(0,\ln\bigl(1+\sqrt2\,\bigr)\bigr)$. Then
\begin{equation*}
\frac{M(a,b)-C(a,b)}{CH(a,b)}=\frac{\frac{\sinh\theta}{\theta}
-\sinh^2\theta-1}{2\sinh^2\theta}
=\frac{\sinh\theta-\theta}{2\theta\sinh^2\theta}-\frac12.
\end{equation*}
In virtue of Lemma~\ref{lem2.2-h1}, Theorem~\ref{thm3.1} is thus proved.
\end{proof}

\begin{cor}\label{cor3.1}
The double inequality
\begin{equation}\label{cor3.1-eq1}
\alpha CH(a,b)+M(a,b)< C(a,b)<\beta CH(a,b)+M(a,b)
\end{equation}
holds for all $a,b>0$ with $a\ne b$ if and only if
$\alpha\le\frac5{12}=0.4166\dotsc$ and
\begin{equation*}
\beta\ge1-\frac1{2\ln(1+\sqrt2\,)}=0.4327\dotsc.
\end{equation*}
\end{cor}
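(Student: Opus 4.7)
The plan is to observe that Corollary~\ref{cor3.1} is nothing more than an algebraic rearrangement of Theorem~\ref{thm3.1}, so the proof should be extremely short. Starting from the proposed double inequality
\begin{equation*}
\alpha CH(a,b)+M(a,b)<C(a,b)<\beta CH(a,b)+M(a,b),
\end{equation*}
I would subtract $M(a,b)$ from each of the three members and then divide through by $CH(a,b)=\frac{(a-b)^2}{a+b}>0$ (which is strictly positive whenever $a\ne b$). This yields the equivalent double inequality
\begin{equation*}
\alpha<\frac{C(a,b)-M(a,b)}{CH(a,b)}<\beta,
\end{equation*}
which, after multiplying by $-1$ and switching the endpoints, becomes
\begin{equation*}
-\beta<\frac{M(a,b)-C(a,b)}{CH(a,b)}<-\alpha.
\end{equation*}
This is precisely the inequality~\eqref{thm3.1-eq} with the symbols $\alpha$ and $\beta$ of that theorem replaced by $-\beta$ and $-\alpha$ respectively.

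Invoking Theorem~\ref{thm3.1}, the last inequality holds for all $a,b>0$ with $a\ne b$ if and only if
\begin{equation*}
-\beta\le\frac1{2\ln(1+\sqrt2\,)}-1 \quad\text{and}\quad -\alpha\ge-\frac5{12},
\end{equation*}
which is equivalent to $\alpha\le\frac5{12}$ and $\beta\ge 1-\frac1{2\ln(1+\sqrt2\,)}$, as claimed. No substantive obstacle is expected here; the only point that requires a little care is the bookkeeping of the sign reversal, making sure that the non-attained infimum and supremum of $h_1(\theta)-\frac12$ on $\bigl(0,\ln(1+\sqrt2\,)\bigr)$ — the quantities that provided the sharp constants in Theorem~\ref{thm3.1} — correspond to the correct (and still sharp) extremal values of $\alpha$ and $\beta$ in~\eqref{cor3.1-eq1}.
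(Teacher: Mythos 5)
Your proposal is correct and is exactly the argument the paper intends: the corollary is stated without proof precisely because it is the sign-reversed rearrangement of Theorem~\ref{thm3.1} obtained by subtracting $M(a,b)$ and dividing by $CH(a,b)>0$, with the sharp constants $\frac5{12}$ and $1-\frac1{2\ln(1+\sqrt2\,)}$ being the negatives of those in~\eqref{thm3.1-eq}. Your bookkeeping of the endpoint swap and the non-attainment of the extremal values is accurate, so nothing further is needed.
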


\begin{cor}
The double inequality
\begin{equation}
\alpha CH(a,b)+M(a,b)< \overline{C}(a,b)<\beta CH(a,b)+M(a,b)
\end{equation}
holds for all $a,b>0$ with $a\ne b$ if and only if $\alpha\le\frac1{12}=0.0833\dotsc$
and
\begin{equation*}
\beta\ge\frac23-\frac1{2\ln(1+\sqrt2\,)}=0.0993\dotsc.
\end{equation*}
\end{cor}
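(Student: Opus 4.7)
The plan is to reduce the inequality to the very same function $h_1$ studied in Lemma~\ref{lem2.2-h1}, exactly as in the proof of Theorem~\ref{thm3.1}, by exploiting the linear relation~\eqref{line-equation} between $\overline{C}$ and $C$. First I would rearrange the desired double inequality as
\[
\alpha<\frac{\overline{C}(a,b)-M(a,b)}{CH(a,b)}<\beta
\]
and then invoke~\eqref{line-equation} in the form $3[C(a,b)-\overline{C}(a,b)]=CH(a,b)$, i.e.\ $\overline{C}(a,b)=C(a,b)-\tfrac13 CH(a,b)$. This immediately gives the identity
\[
\frac{\overline{C}(a,b)-M(a,b)}{CH(a,b)}=\frac{C(a,b)-M(a,b)}{CH(a,b)}-\frac13=-\frac{M(a,b)-C(a,b)}{CH(a,b)}-\frac13,
\]
so the problem is reduced to the quantity already analyzed in Theorem~\ref{thm3.1}.

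Next, following the substitutions of Theorem~\ref{thm3.1}, I would set $x=a/b>1$, $t=(x-1)/(x+1)\in(0,1)$, and $t=\sinh\theta$ with $\theta\in\bigl(0,\ln(1+\sqrt2\,)\bigr)$. As shown there,
\[
\frac{M(a,b)-C(a,b)}{CH(a,b)}=\frac{\sinh\theta-\theta}{2\theta\sinh^2\theta}-\frac12=h_1(\theta)-\frac12,
\]
so combining with the previous identity,
\[
\frac{\overline{C}(a,b)-M(a,b)}{CH(a,b)}=\frac16-h_1(\theta).
\]

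From Lemma~\ref{lem2.2-h1}, $h_1$ is strictly decreasing on $(0,\infty)$ with $h_1(0^+)=\tfrac1{12}$, so $\tfrac16-h_1(\theta)$ is strictly increasing on $\bigl(0,\ln(1+\sqrt2\,)\bigr)$. The lower endpoint as $\theta\to0^+$ yields
\[
\frac16-\frac1{12}=\frac1{12},
\]
and at the upper endpoint $\theta=\ln(1+\sqrt2\,)$, using $\sinh\theta=1$, a short computation gives
\[
h_1\bigl(\ln(1+\sqrt2\,)\bigr)=\frac{1-\ln(1+\sqrt2\,)}{2\ln(1+\sqrt2\,)},
\qquad
\frac16-h_1\bigl(\ln(1+\sqrt2\,)\bigr)=\frac23-\frac1{2\ln(1+\sqrt2\,)}.
\]
Monotonicity together with these two limits pinpoints the sharp constants $\alpha=\tfrac1{12}$ and $\beta=\tfrac23-\tfrac1{2\ln(1+\sqrt2\,)}$, establishing the corollary.

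The only genuine work is the bookkeeping described above; there is essentially no new analytic obstacle because Lemma~\ref{lem2.2-h1} carries all the weight and the linear identity~\eqref{line-equation} does the conversion from $C$ to $\overline{C}$. If anything, the one spot to be careful about is the sign flip when passing from $\frac{M-C}{CH}$ (decreasing) to $\frac{\overline{C}-M}{CH}$ (increasing), and correctly identifying which endpoint yields which sharp constant.
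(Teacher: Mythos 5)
Your proposal is correct and follows exactly the route the paper intends: the corollary is a direct consequence of Theorem~\ref{thm3.1} via the relation $3[C(a,b)-\overline{C}(a,b)]=CH(a,b)$ from~\eqref{line-equation}, with all the analytic content carried by Lemma~\ref{lem2.2-h1}. Your endpoint computations ($\tfrac16-\tfrac1{12}=\tfrac1{12}$ and $\tfrac16-h_1(\ln(1+\sqrt2\,))=\tfrac23-\tfrac1{2\ln(1+\sqrt2\,)}$) and the handling of the sign flip are accurate.
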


\section{Some new inequalities involving Neuman-S\'andor mean}

Finally we further establish some new inequalities involving Neuman-S\'andor, centroidal, root-square, and other means.

\begin{thm}\label{thm3.2}
The inequality
\begin{equation}
M(a,b)>\lambda CH(a,b)
\end{equation}
holds for all $a,b>0$ with $a\ne b$ if and only if $\lambda\le\frac1{2\ln(1+\sqrt2\,)}=0.5672\dotsc$.
\end{thm}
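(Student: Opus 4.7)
The plan is to reduce the two-variable ratio $M(a,b)/CH(a,b)$ to a single-variable function via the same substitutions used in the proof of Theorem~\ref{thm3.1}, and then extract the sharp constant directly from monotonicity together with the boundary value at $t=1$.

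Without loss of generality assume $a>b>0$, set $x=a/b>1$, and then $t=\frac{x-1}{x+1}\in(0,1)$. A direct computation using the definitions of $M$ and of $CH(a,b)=\frac{(a-b)^2}{a+b}$ gives
\begin{equation*}
\frac{M(a,b)}{CH(a,b)}=\frac{a+b}{2(a-b)\arcsinh\frac{a-b}{a+b}}=\frac{1}{2t\,\arcsinh t}.
\end{equation*}
Next substitute $t=\sinh\theta$, which maps $t\in(0,1)$ bijectively onto $\theta\in\bigl(0,\ln(1+\sqrt{2}\,)\bigr)$ since $\arcsinh 1=\ln(1+\sqrt{2}\,)$. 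This yields
\begin{equation*}
\frac{M(a,b)}{CH(a,b)}=\frac{1}{2\,\theta\sinh\theta},\qquad \theta\in\bigl(0,\ln(1+\sqrt{2}\,)\bigr).
\end{equation*}

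The key step is now essentially free: the denominator $\theta\sinh\theta$ is a product of two strictly positive, strictly increasing functions on $(0,\infty)$, hence strictly increasing, so $\theta\mapsto\frac{1}{2\theta\sinh\theta}$ is strictly decreasing on the interval. Its two endpoint behaviors are
\begin{equation*}
\lim_{\theta\to 0^{+}}\frac{1}{2\theta\sinh\theta}=+\infty,\qquad
\lim_{\theta\to\ln(1+\sqrt{2}\,)^{-}}\frac{1}{2\theta\sinh\theta}=\frac{1}{2\ln(1+\sqrt{2}\,)},
\end{equation*}
the second because $\sinh\theta\to 1$. Thus the infimum of $M(a,b)/CH(a,b)$ over $a\ne b$ equals $\frac{1}{2\ln(1+\sqrt{2}\,)}$ and is not attained.

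From this, the sharp equivalence follows immediately: if $\lambda\le\frac{1}{2\ln(1+\sqrt{2}\,)}$, then $M(a,b)/CH(a,b)>\lambda$ strictly for all admissible $a,b$ since the infimum is not attained; conversely, if $\lambda>\frac{1}{2\ln(1+\sqrt{2}\,)}$, then by taking $\theta$ sufficiently close to $\ln(1+\sqrt{2}\,)$ (i.e.\ $a/b$ sufficiently large) one finds $a,b$ with $M(a,b)/CH(a,b)<\lambda$, violating the inequality. There is no genuine obstacle here: once the substitution $t=\sinh\theta$ collapses the problem to the elementary function $\frac{1}{2\theta\sinh\theta}$, monotonicity is obvious and the sharp constant reads off directly from the right endpoint, so no appeal to Lemmas~\ref{lem2.2-h1}--\ref{lem2.4-h3} is needed.
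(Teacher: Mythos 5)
Your proposal is correct and follows essentially the same route as the paper: both reduce $M(a,b)/CH(a,b)$ to the one-variable function $\frac{1}{2t\arcsinh t}$ on $(0,1)$, establish that it is strictly decreasing, and read off the sharp constant from the limit $\frac{1}{2\ln(1+\sqrt2\,)}$ as $t\to1^-$. The only difference is cosmetic --- the paper verifies the monotonicity by differentiating $f(t)=\frac1{2t\arcsinh t}$ directly, whereas you pass to $t=\sinh\theta$ and observe that $\theta\sinh\theta$ is a product of positive increasing functions; both are valid and neither needs the lemmas of Section~2.
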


\begin{proof}
It is clear that
\begin{equation*}
\frac{M(a,b)}{CH(a,b)}=\frac{(a-b)(a+b)}{(a-b)^22\arcsinh\frac{a-b}{a+b}}
=\frac{a+b}{a-b}\frac1{2\arcsinh\frac{a-b}{a+b}}.
\end{equation*}
Without loss of generality, we assume that $a>b>0$. Let $x=\frac{a-b}{a+b}$. Then $x\in(0,1)$ and
\begin{equation*}
\frac{M(a,b)}{CH(a,b)}=\frac1{2x\arcsinh x}\triangleq f(x).
\end{equation*}
Differentiating $f(x)$ yields
\begin{equation*}
f'(x)=-\frac{\frac{x}{\sqrt{1+x^2}}\,+\arcsinh x}{2x^2\arcsinh^2 x}\le0
\end{equation*}
which means that function $f(x)$ is decreasing for $x\in(0,1)$.
\par
It is apparent that
\begin{equation*}
\lim_{x\to1^-}f(x)=\frac1{2\ln(1+\sqrt2\,)}.
\end{equation*}
The proof of Theorem~\ref{thm3.2} is thus complete.
\end{proof}

\begin{thm}\label{thm3.3}
The double inequality
\begin{equation}
\alpha Q(a,b)+(1-\alpha)M(a,b)<\overline{C}(a,b)<\beta Q(a,b)+(1-\beta)M(a,b)
\end{equation}
holds for all $a,b>0$ with $a\ne b$ if and only if $\alpha\le\frac12$ and
\begin{equation*}
\beta\ge\frac{3-4\ln(1+\sqrt2\,)}{3[1-\sqrt2\,\ln(1+\sqrt2\,)]}=0.7107\dotsc.
\end{equation*}
\end{thm}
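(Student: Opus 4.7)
The plan is to mimic the ratio-reduction strategy already used in Theorem~\ref{thm3.1} and to match the resulting expression with the auxiliary function $h_2$ whose monotonicity is supplied by Lemma~\ref{lem2.3-h2}. Specifically, from the chain $A<M<T<Q$ one has $Q(a,b)>M(a,b)$ for $a\ne b$, so the stated double inequality is equivalent to
\begin{equation*}
\alpha<\frac{\overline{C}(a,b)-M(a,b)}{Q(a,b)-M(a,b)}<\beta.
\end{equation*}
Hence it suffices to determine the infimum and supremum of this ratio on $\{a>b>0\}$.

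Next I would introduce the same two-step substitution used in the proof of Theorem~\ref{thm3.1}: set $t=\frac{a-b}{a+b}\in(0,1)$, which turns all the means into multiples of $A(a,b)=\frac{a+b}{2}$. A direct computation gives
\begin{equation*}
\frac{Q}{A}=\sqrt{1+t^2},\qquad \frac{\overline{C}}{A}=1+\frac{t^2}{3},\qquad \frac{M}{A}=\frac{t}{\arcsinh t},
\end{equation*}
so that $A$ cancels from the ratio. Then the further substitution $t=\sinh\theta$, with $\theta\in\bigl(0,\ln(1+\sqrt2\,)\bigr)$, converts $\sqrt{1+t^2}\to\cosh\theta$, $1+\tfrac{t^2}{3}\to 1+\tfrac{\sinh^2\theta}{3}$, and $\tfrac{t}{\arcsinh t}\to\tfrac{\sinh\theta}{\theta}$. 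The ratio becomes
\begin{equation*}
\frac{\overline{C}-M}{Q-M}=\frac{1-\frac{\sinh\theta}{\theta}+\frac{\sinh^2\theta}{3}}{\cosh\theta-\frac{\sinh\theta}{\theta}}=h_2(\theta),
\end{equation*}
which is exactly the function treated in Lemma~\ref{lem2.3-h2}.

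By Lemma~\ref{lem2.3-h2}, $h_2$ is strictly increasing on $(0,\infty)$, so its range on $\bigl(0,\ln(1+\sqrt2\,)\bigr)$ is the open interval $\bigl(\lim_{\theta\to 0^+}h_2(\theta),\,h_2\bigl(\ln(1+\sqrt2\,)\bigr)\bigr)$. The left endpoint is $\tfrac12$ from the lemma. For the right endpoint I would plug in $\theta_0=\ln(1+\sqrt2\,)$, using $\sinh\theta_0=1$ and $\cosh\theta_0=\sqrt2$, which gives
\begin{equation*}
h_2(\theta_0)=\frac{1-\frac{1}{\ln(1+\sqrt2\,)}+\frac13}{\sqrt2-\frac{1}{\ln(1+\sqrt2\,)}}=\frac{4\ln(1+\sqrt2\,)-3}{3\bigl[\sqrt2\,\ln(1+\sqrt2\,)-1\bigr]}=\frac{3-4\ln(1+\sqrt2\,)}{3\bigl[1-\sqrt2\,\ln(1+\sqrt2\,)\bigr]},
\end{equation*}
which matches the claimed value of $\beta$. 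Thus the sharp bounds follow immediately, finishing the proof.

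I do not expect any genuine obstacle. The only step requiring care is the algebraic reduction of $\overline{C}/A$ and $Q/A$ in terms of $t$ (using $a^2+b^2=\tfrac12(a+b)^2(1+t^2)$ and $a^2+ab+b^2=\tfrac14(a+b)^2(3+t^2)$), after which the identification with $h_2$ is automatic and Lemma~\ref{lem2.3-h2} does all the analytic work.
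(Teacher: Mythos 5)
Your proposal is correct and follows essentially the same route as the paper: reduce the double inequality to bounding the ratio $\frac{\overline{C}-M}{Q-M}$, substitute $t=\frac{a-b}{a+b}$ and $t=\sinh\theta$ to identify it with $h_2(\theta)$ on $\bigl(0,\ln(1+\sqrt2\,)\bigr)$, and invoke Lemma~\ref{lem2.3-h2}. Your explicit evaluation of $h_2$ at the endpoint $\ln(1+\sqrt2\,)$ and the justification $Q>M$ are correct details the paper leaves implicit.
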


\begin{proof}
It is sufficient to show
\begin{equation*}
\alpha<\frac{\overline{C}(a,b)-M(a,b)}{Q(a,b)-M(a,b)}<\beta.
\end{equation*}
Without loss of generality, we assume that $a>b>0$. Let $x=\frac{a}b$. Then $x>1$ and
\begin{equation*}
\frac{\overline{C}(a,b)-M(a,b)}{Q(a,b)-M(a,b)}
=\frac{\frac{2(x^2+x+1)}{3(x+1)}-\frac{x-1}{2\arcsinh{\frac{x-1}{x+1}}}
}{\sqrt{\frac{x^2+1}2}\,-\frac{x-1}{2\arcsinh{\frac{x-1}{x+1}}}}.
\end{equation*}
Let $t=\frac{x-1}{x+1}$. Then $t\in(0,1)$ and
\begin{equation*}
\frac{\overline{C}(a,b)-M(a,b)}{Q(a,b)-M(a,b)}
=\frac{\frac{t^2}3+1-\frac{t}{\arcsinh{t}}}{\sqrt{1+t^2}\,-\frac{t}{\arcsinh{t}}}.
\end{equation*}
Let $t=\sinh\theta $ for $\theta\in\bigl(0,\ln\bigl(1+\sqrt2\,\bigr)\bigr)$. Then
\begin{equation*}
\frac{\overline{C}(a,b)-M(a,b)}{Q(a,b)-M(a,b)}
=\frac{\frac{\sinh^2\theta}3+1-\frac{\sinh\theta}\theta}{\cosh\theta-\frac{\sinh\theta}\theta}.
\end{equation*}
By Lemma~\ref{lem2.3-h2}, we obtain Theorem~\ref{thm3.3}.
\end{proof}

\begin{thm}\label{thm3.4}
The double inequality
\begin{equation}\label{thm3.4-eq1}
\alpha C(a,b)+(1-\alpha)M(a,b)<Q(a,b)<\beta C(a,b)+(1-\beta)M(a,b)
\end{equation}
holds for all $a,b>0$ with $a\ne b$ if and only if
\begin{equation*}
\alpha\le\frac{\sqrt2\,\ln(1+\sqrt2\,)-1}{2\ln(1+\sqrt2\,)-1}=0.3231\dotsc
\quad \text{and}\quad \beta\ge\frac25.
\end{equation*}
\end{thm}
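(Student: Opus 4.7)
The plan is to follow the template used for Theorems~\ref{thm3.1} and~\ref{thm3.3}: rearrange the double inequality as a two-sided bound on a single ratio of differences of means, reduce that ratio via the successive substitutions $x=a/b$, $t=(x-1)/(x+1)$, and $t=\sinh\theta$, and then read off the conclusion from one of the auxiliary lemmas in Section~2. In the present case the relevant lemma will turn out to be Lemma~\ref{lem2.4-h3}, since the function $h_3$ appears to have been tailored for exactly this reduction.

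Concretely, I would first observe that $C(a,b)>Q(a,b)>M(a,b)$: the first inequality reduces to $(a-b)^2>0$ after squaring, and the second is part of the chain of inequalities recalled in the introduction. Hence $C-M>0$ and~\eqref{thm3.4-eq1} is equivalent to
\begin{equation*}
\alpha<\frac{Q(a,b)-M(a,b)}{C(a,b)-M(a,b)}<\beta.
\end{equation*}
Assuming $a>b>0$ and setting $x=a/b$ and $t=(x-1)/(x+1)\in(0,1)$, a direct computation gives the identities $Q/A=\sqrt{1+t^2}$, $C/A=1+t^2$, and $M/A=t/\arcsinh t$, so that after dividing numerator and denominator by $A(a,b)$ the above ratio becomes
\begin{equation*}
\frac{\sqrt{1+t^2}\,-\frac{t}{\arcsinh t}}{1+t^2-\frac{t}{\arcsinh t}}.
\end{equation*}
Putting $t=\sinh\theta$ with $\theta\in\bigl(0,\ln(1+\sqrt{2}\,)\bigr)$ then converts this expression into exactly $h_3(\theta)$.

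At this point Lemma~\ref{lem2.4-h3} does the remaining work: $h_3$ is strictly decreasing on $(0,\infty)$ with $\lim_{\theta\to 0^+}h_3(\theta)=\frac{2}{5}$, which yields the sharp upper bound $\beta\ge\frac{2}{5}$; at the other endpoint $\theta=\ln(1+\sqrt{2}\,)$ one has $\sinh\theta=1$ and $\cosh\theta=\sqrt{2}$, so
\begin{equation*}
h_3\bigl(\ln(1+\sqrt{2}\,)\bigr)=\frac{\sqrt{2}\,\ln(1+\sqrt{2}\,)-1}{2\ln(1+\sqrt{2}\,)-1},
\end{equation*}
matching the claimed lower constant $\alpha$, and the strict monotonicity of $h_3$ yields the sharpness of both bounds. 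I do not expect any real obstacle, since all of the delicate analysis has been packaged into Lemma~\ref{lem2.4-h3}; the only things to verify carefully are the algebraic reduction of the ratio to $h_3(\theta)$ and the endpoint evaluation at $\theta=\ln(1+\sqrt{2}\,)$, and the fact that both fall out cleanly is a strong indication that $h_3$ was introduced in Section~2 with precisely this theorem in mind.
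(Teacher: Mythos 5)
Your proposal is correct and follows essentially the same route as the paper: the same rearrangement into a two-sided bound on $\frac{Q-M}{C-M}$, the same substitutions $x=a/b$, $t=(x-1)/(x+1)$, $t=\sinh\theta$, and the same appeal to Lemma~\ref{lem2.4-h3}. The only additions are the (correct and welcome) explicit check that $C>Q>M$ and the endpoint evaluation $h_3(\ln(1+\sqrt2\,))$, which the paper leaves implicit.
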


\begin{proof}
The double inequalities~\eqref{thm3.4-eq1} is the same as
\begin{equation*}
\alpha<\frac{Q(a,b)-M(a,b)}{C(a,b)-M(a,b)}<\beta.
\end{equation*}
Without loss of generality, we assume that $a>b>0$. Let $x=\frac{a}b$. Then $x>1$ and
\begin{equation*}
\frac{Q(a,b)-M(a,b)}{C(a,b)-M(a,b)}=\frac{\sqrt{\frac{x^2+1}2}\,-\frac{x-1}{2\arcsinh{\frac{x-1}{x+1}}}
}{\frac{x^2+1}{x+1}-\frac{x-1}{2\arcsinh{\frac{x-1}{x+1}}}}.
\end{equation*}
Let $t=\frac{x-1}{x+1}$. Then $t\in(0,1)$ and
\begin{equation*}
\frac{Q(a,b)-M(a,b)}{C(a,b)-M(a,b)}=\frac{\sqrt{1+t^2}\,-\frac{t}{\arcsinh{t}}}{1+t^2-\frac{t}{\arcsinh{t}}}.
\end{equation*}
Let $t=\sinh\theta $ for $\theta\in\bigl(0,\ln\bigl(1+\sqrt2\,\bigr)\bigr)$. Then
\begin{equation*}
\frac{Q(a,b)-M(a,b)}{C(a,b)-M(a,b)}
=\frac{\cosh\theta-\frac{\sinh\theta}\theta}{1+\sinh^2\theta-\frac{\sinh\theta}\theta}.
\end{equation*}
According to Lemma~\ref{lem2.4-h3}, the proof of Theorem~\ref{thm3.3} is complete.
\end{proof}

\end{document}